\newtheorem{thm}{Theorem}[section]
\newtheorem{lemma}[thm]{Lemma}
\newtheorem{prop}[thm]{Proposition}
\theoremstyle{definition} 
\newtheorem{remark}[thm]{Remark}
\newcommand{\C}{\mathbb{C}}	
\newcommand{\F}{\mathbb{F}}
\newcommand{\tmf}{\mathrm{tmf}}
\newcommand{\mmf}{\mathrm{mmf}}
\DeclareMathOperator{\Sq}{Sq}
\newcommand{\cA}{\mathcal{A}}
\newcommand{\cl}{\mathrm{cl}}
\begin{document}

\title
[Sparse $\C$-motivic family]
{A sparse periodic family in the cohomology of the $\C$-motivic Steenrod algebra}

\author[Isaksen]{Dan Isaksen}
\address{Wayne State University}
\email{isaksen.dan@gmail.com}

\author[Kong]{Hana Jia Kong}
\address{School of Mathematical Sciences, Zhejiang University, Hangzhou, China}\email{hjkong@zju.edu.cn}

\author[Li]{Guchuan Li}
\address{School of Mathematical Sciences, Peking University, Beijing, China}\email{liguchuan@math.pku.edu.cn}

\author[Ruan]{Yangyang Ruan}
\address{Beijing Key Laboratory of Topological Statistics and Applications for Complex Systems, Beijing Institute of Mathematical Sciences and Applications, Beijing 101408, China}\email{ruanyy@amss.ac.cn}

\author[Zhu]{Heyi Zhu}
\address{Department of Mathematics, University of Illinois, Urbana-Champaign, Urbana, Illinois 61801, USA}
\email{heyizhu2@illinois.edu}

\date{April 2025}

\thanks{The first author was supported by NSF grant DMS-2202267.}

\subjclass[2020]{Primary 55T15; Secondary 14F42}

\keywords{Steenrod algebra, Adams spectral sequence, $\C$-motivic stable homotopy theory}

\begin{abstract}
We study a particular family of elements in the cohomology of the $\C$-motivic Steenrod algebra, also known as the $\C$-motivic Adams $E_2$-page.  This family exhibits unusual periodicity properties, and it is related both to $h_1$-localization and to the algebraic Hurewicz image of the motivic modular forms spectrum $\mmf$.
\end{abstract}

\maketitle

\section{Introduction}
\label{sctn:intro}

The goal of this article is to study a particular infinite family in the cohomology of the $\C$-motivic Steenrod algebra, also known as the $E_2$-page of the $\C$-motivic Adams spectral sequence that converges to $\C$-motivic homotopy \cite{Morel99} \cite{DI10}.  Hieu Thai \cite{Tha21} first expressed interest in these specific elements and proposed a conjecture about their existence.  This manuscript contains a complete answer to Thai's conjecture.

More specifically, we study the existence of elements in degrees of the form $k (20, 4, 12) + (17, 4, 10)$.  Here we grade in terms of stem, Adams filtration, and motivic weight.  We use the symbols $e_0 g^k$ to refer to our elements because those names are consistent with their interesting properties.  Despite their ``compound'' names, these elements are in fact indecomposable in the product structure in the cohomology of the $\C$-motivic Steenrod algebra.  The indecomposability is related to the fact that $g^k$ does not survive to the $\C$-motivic Adams $E_2$-page, although $\tau g^k$ does survive.

Thai showed that $e_0 g^k$ does not exist in the cohomology of the $\C$-motivic Steenrod algebra if $k$ is not of the form $2^n-1$  \cite{Tha21}.  We prove that $e_0 g^k$ does exist if $k$ is of the form $2^n-1$.

Recall that the Chow degree of an element in degree $(s,f,w)$ is $s+f-2w$, and note that the Chow degree of $e_0 g^k$ is $1$.  In fact, the term $k(20, 4, 12)$ contributes $0$ to the Chow degree, while the term $(17, 4, 10)$ contributes $1$.

The Burklund-Xu spectral sequence \cite{BX23} \cite{Benson24} \cite{KILRZ25} is a powerful tool for studying the cohomology of the $\C$-motivic Steenrod algebra in Chow degree one.  The usefulness of this spectral sequence in Chow degree one is demonstrated in detail in \cite{KILRZ25}, and it is our main tool for studying the elements $e_0 g^k$.  We assume all of the notation and background from \cite{KILRZ25}.

\subsection*{Definition of \texorpdfstring{$e_0 g^k$}{e0gk}}

In order to make a precise statement about the existence of elements of the form $e_0 g^k$, we must be precise about what properties these elements have.  One approach is to define $e_0 g^k$ to be an element in degree $k(20, 4, 12) + (17, 4, 10)$ with the property that its $\tau$-localization in the cohomology of the classical Steenrod algebra equals the product $e_0 \cdot g^k$.  We already know (for example, by comparison to the cohomology of classical $\cA(2)$) that $e_0 g^k$ exists and is non-zero in the cohomology of the classical Steenrod algebra, so we know that there exists some $\C$-motivic element that $\tau$-localizes to it.  However, the weight of such an element is not predetermined.  One perspective on our work is that we show that the weight of such a $\C$-motivic lift of $e_0 g^k$ is $12k+10$ when $k$ is of the form $2^n-1$, and the weight of such a lift is $12k+9$ otherwise.

Another approach is to define the $\C$-motivic element $e_0 g^k$ to be an element in degree $k (20, 4, 12) + (17, 4, 10)$ with the property that it maps to the decomposable element $e_0 \cdot g^k$ in the cohomology of $\C$-motivic $\cA(2)$.  Note that $g$ is present in the cohomology of $\C$-motivic $\cA(2)$, even though it is not present in the cohomology of the $\C$-motivic Steenrod algebra.  In other words, the study of the existence of $e_0 g^k$ is relevant to the algebraic $\C$-motivic $\mmf$-Hurewicz image, i.e., the image of the map from the $\C$-motivic Adams $E_2$-page for the sphere to the $\C$-motivic Adams $E_2$-page for the $\C$-motivic modular forms spectrum $\mmf$ \cite{GIKR21}.

\subsection*{\texorpdfstring{$h_1$}{h1}-localization}

Recall that $h_1$ is not nilpotent in the cohomology of the $\C$-motivic Steenrod algebra.  We already have a thorough understanding of the $h_1$-periodic situation, both computationally \cite{GI15} and theoretically \cite{BachmannHopkins}.  However, there are some details about the $h_1$-periodic computations that have not been settled.  The $h_1$-periodicization map takes the form
\[
H^{***} \cA \rightarrow H^{***} \cA[h_1^{-1}] = \F_2[h_1^{\pm 1}][v_1^4, v_2, v_3, \ldots ],
\]
where $\cA$ is the $\C$-motivic Steenrod algebra and $H^{***} \cA$ is its tri-graded cohomology.  The target of this map is a polynomial algebra, so its elements are easy to describe.  However, we previously were not able to find preimages of specific elements in the target.  In other words, we knew the $h_1$-periodicization abstractly, but we did not understand the $h_1$-periodicization map.

Our work shows that $e_0 g^{2^n-1}$ maps to $v_{n-2}$, up to invertible $h_1$ factors.  We also know that $P h_1$ and $c_0$ map to $v_1^4$ and $v_2$ respectively, up to invertible $h_1$ factors.  By taking products of these elements, we can now easily find preimages for any element in the localization.

More generally, given an element $x$ in the localization, one might wish to find the element $y$ of $H^{***} \cA$ with smallest possible stem such that $y$ localizes to $x$.  For example, $P h_1$ and $c_0$ are the elements with smallest possible stems that localize to $v_1^4$ and $v_2$ respectively.  Our work shows that $e_0 g^{2^n-1}$ is the element with smallest possible stem that localizes to $v_{n-2}$.  Another way to express this paragraph is in terms of the quotient of $H^{***} \cA$ by the $h_1$-power torsion.  We do not have complete control over this $h_1$-free quotient, although our results here contribute new information about it.  For example, $M h_1$ in the $45$-stem is the element with smallest possible stem that localizes to $v_3^3 + v_2^2 v_4$.  However, we only know this through explicit computation, and we do not yet have a theoretical understanding of this phenomenon.

\subsection*{\texorpdfstring{$w_1$}{w1}-periodicity}

The family of elements of the form $e_0 g^{2^n-1}$ displays a curious property with respect to $w_1$-periodicity.  See \cite{And18} and \cite{KILRZ25} for the precise meaning of $w_1$-periodicity.

Each member of our family is obtained from the previous member by application of a $w_1$-periodicity operator.  More specifically,
\[
e_0 g^{2^{n+1}-1} = w_1^{2^{n+2}} \cdot e_0 g^{2^n-1}
\]
where $w_1^{2^{n+2}} \cdot (-)$ is a Massey product operator $\left\langle h_1, h_1^{2^{n+2}-1} h_{n+4}, - \right\rangle$.  However, none of these Massey product operators can be iterated, i.e.,
\[
w_1^{2^{n+2}} \cdot \left( w_1^{2^{n+2}} \cdot e_0 g^{2^n-1} \right)
\]
is not well-defined.  The elements $e_0 g^{2^n-1}$ are related by $w_1$-periodicity, but they are not a ``$w_1$-periodic family'' in the usual sense in which the degrees grow arithmetically.  Rather, these elements form some kind of ``sparse'' family in which the degrees grow geometrically.

The sparseness of the family of elements $e_0 g^{2^n-1}$ is an exotic phenomenon, in the sense that it does not occur with $v_1$-periodicity in the cohomologies of the classical or $\C$-motivic Steenrod algebras.

\subsection*{Algebraic \texorpdfstring{$\C$}{C}-motivic Hurewicz image}

The $\tmf$-Hurewicz map is the unit map $S^0 \rightarrow \tmf$ of the topological modular forms spectrum $\tmf$.  The image of this map on stable homotopy is completely known \cite{BMQ23}.  The $\C$-motivic analogue of this question is to compute the image in stable homotopy of the $\C$-motivic unit map $S^{0,0} \rightarrow \mmf$, where $\mmf$ is the $\C$-motivic modular forms spectrum \cite{GIKR21}.  The image of the $\mmf$-Hurewicz map has not been completely determined.

An approximation to the $\mmf$-Hurewicz image is the algebraic $\mmf$-Hurewicz image, which is the image of the map of $\C$-motivic Adams $E_2$-pages induced by the unit map.  Our work demonstrates that the algebraic $\mmf$-Hurewicz image is not so easy to describe.  For example, the product $e_0 g^k$ belongs to the $\mmf$-Hurewicz image if and only if $k$ is of the form $2^n-1$.  On the other hand, the element $\tau e_0 g^k$ does belong to the algebraic $\mmf$-Hurewicz image for all $k$.

We expect that other similarly complicated patterns appear in the algebraic $\mmf$-Hurewicz image, although we have not conducted a thorough analysis.

\subsection*{Notation}

We borrow all notation, terminology, and grading conventions from \cite{KILRZ25}.  The reader is especially warned about our non-standard grading conventions in the Burklund-Xu spectral sequence.  See \cite[Section 5.2]{KILRZ25} for a careful treatment.

\section{Analysis of the Burklund-Xu spectral sequence}

\begin{lemma}
\label{lem:BX-h0-local}
The $h_0$-localization of the Burklund-Xu spectral sequence in Chow degree one  takes the form
\[
E_1 = q_0 \cdot \F_2[h_0^{\pm 1}] \oplus q_1 \cdot \F_2[h_0^{\pm 1}] \oplus q_2 \cdot \F_2[h_0^{\pm 1}] \oplus \cdots.
\]
The only differential is $d_1(q_1) = q_0 \cdot h_0$, and the $E_\infty$-page takes the form
\[
E_\infty = q_2 \cdot \F_2[h_0^{\pm 1}] \oplus q_3 \cdot \F_2[h_0^{\pm 1}] \oplus \cdots.
\]
\end{lemma}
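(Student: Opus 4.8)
The plan is to bootstrap from the explicit description of the Burklund--Xu $E_1$-page in Chow degree one recorded in \cite{KILRZ25}, and to track exactly what survives once $h_0$ is inverted. Since $h_0$ is a permanent cycle and the differentials are $h_0$-linear, inverting $h_0$ yields a bona fide spectral sequence whose pages are the localizations of the original ones, so the first task is purely algebraic: identify the $h_0$-power-torsion-free part of the $E_1$-page of \cite{KILRZ25}. From the $\F_2[h_0]$-module structure displayed there one reads off that the classes that are not $h_0$-power torsion are precisely $q_0, q_1, q_2, \ldots$, and hence $E_1[h_0^{-1}] = \bigoplus_{i \ge 0} q_i \cdot \F_2[h_0^{\pm 1}]$, as asserted. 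The bulk of the complexity of the unlocalized $E_1$-page is $h_0$-torsion and disappears here, so this step should be routine once the charts of \cite{KILRZ25} are in hand.

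Next I would cut down the possible differentials by a degree count. The generators $q_i$ sit in tri-degrees whose stems grow geometrically with $i$ -- this is exactly the sparseness phenomenon emphasized in the introduction. Each $d_r$ has a fixed tri-degree, and $h_0$ lies in stem $0$, so a nonzero $d_r\big(h_0^a q_j\big) = h_0^b q_i$ forces $\mathrm{stem}(q_j) - \mathrm{stem}(q_i)$ to equal the fixed stem shift of $d_r$, while the integer $b$ is then pinned down by the remaining grading. Because the gaps $\mathrm{stem}(q_{i+1}) - \mathrm{stem}(q_i)$ are strictly increasing, at most the single pair $(q_0, q_1)$ can be linked, and only by $d_1$; no $d_r$ with $r \ge 2$ can connect two of the classes, and in particular no $q_i$ with $i \ge 2$ can support or receive a differential. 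Thus the only differential not ruled out on the nose is $d_1(q_1) = h_0 q_0$ (the power of $h_0$ being forced by the weight and filtration), and the spectral sequence necessarily collapses at $E_2$.

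It remains to decide whether $q_1$ is a cycle or whether $d_1(q_1) = h_0 q_0$, and this -- the one genuinely nonformal input -- is the main obstacle. The class $q_0$, being at the bottom, is a permanent cycle, so the question is binary. I would resolve it by a small comparison computation in the finite range relevant to $q_0$ and $q_1$, where the localized spectral sequence does converge: a surviving $q_1$ would produce a nonzero $h_0$-periodic class of $H^{***}\cA$ in that degree; this contradicts either the classical computation (the $h_0$-localization of the cohomology of the classical Steenrod algebra is concentrated in the zero stem) or the explicit $\C$-motivic $\mathrm{Ext}$ chart in that range. Alternatively one can produce the differential directly, e.g.\ by exhibiting the governing Massey product in the localized cobar complex, or -- if the multiplicative structure of the $E_1$-page together with a lower differential is available from \cite{KILRZ25} -- by the Leibniz rule. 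Any of these pins down $d_1(q_1) = h_0 q_0$.

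Finally, one assembles the pieces. The differential $d_1(q_1) = h_0 q_0$ removes $q_1 \cdot \F_2[h_0^{\pm 1}]$ (it consists of non-cycles) and $q_0 \cdot \F_2[h_0^{\pm 1}]$ (it is a boundary, since $h_0$ is invertible), while by the degree count above every other class survives and no higher differential occurs. Hence $E_\infty = E_2 = \bigoplus_{i \ge 2} q_i \cdot \F_2[h_0^{\pm 1}]$, which is the stated conclusion.
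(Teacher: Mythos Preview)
Your approach is correct and matches the paper's: both obtain the localized $E_1$-page from the description in \cite{KILRZ25} together with the fact that the $h_0$-localization of the classical Adams $E_2$-page is $\F_2[h_0^{\pm 1}]$, both exclude all other differentials by a degree count, and both then read off $E_\infty$. The only difference is that the paper cites \cite[Lemma~4.10]{BX23} directly for $d_1(q_1)=q_0 h_0$, whereas you propose to derive it via a comparison or low-dimensional computation; either route works.
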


\begin{proof}
The description of the $E_1$-page follows from \cite[Section 5.2]{KILRZ25} and the observation that $\F_2[h_0^{\pm 1}]$ is the $h_0$-localization of the cohomology of the classical Steenrod algebra.  The differential $d_1(q_1) = q_0 \cdot h_0$ is established in \cite[Lemma 4.10]{BX23}.  For degree reasons, there are no other possible differentials.
\end{proof}

We restate a classical result of Adams about the vanishing of the Adams $E_2$-page above a line of slope $\frac{1}{2}$.  As in \cite[Section 2]{KILRZ25}, $s-2f$ is the $v_1$-intercept of an element in stem $s$ and Adams filtration $f$.  On a standard Adams chart, the $v_1$-intercept is the $x$-intercept of the line of slope $\frac{1}{2}$ that passes through the point $(s,f)$.  The use of $v_1$ in the terminology reflects that the slope of $v_1$-multiplication is $\frac{1}{2}$.

\begin{prop}
\label{prop:Adams-vanishing}
\cite{Ada66}
The classical Adams $E_2$-page is concentrated in degrees with $v_1$-intercept greater than or equal to $-3$, with the exception of the elements $h_0^k$ in degree $(0,k)$ for $k \geq 2$.
\end{prop}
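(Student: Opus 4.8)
The plan is to reprove Proposition~\ref{prop:Adams-vanishing} along Adams's original route \cite{Ada66}, deducing the vanishing line from his general periodicity theorem for connected graded $\F_2$-algebras. First I would recast the statement purely algebraically: writing the classical Adams $E_2$-page as $\mathrm{Ext}^{f,s+f}_{\cA}(\F_2,\F_2)$, the claim is that this group vanishes whenever $s>0$ and $s-2f<-3$, the excluded classes $h_0^k$ being exactly the image of $\mathrm{Ext}_{\cA(0)}(\F_2,\F_2)=\F_2[h_0]$ under the map induced by the quotient $\cA\map\cA(0)$. I would also fix the $v_1$-periodicity operator to be used in the argument, namely a Massey product of the shape $\langle h_3, h_0^4, -\rangle$, of bidegree $(8,4)$; note that its slope $\tfrac{4}{8}=\tfrac12$ equals the slope of the proposed vanishing line, which is the crucial coincidence driving the whole proof.

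The core of the argument is to prove that \emph{above} a line of slope $\tfrac12$ the periodicity operator acts invertibly on the Adams $E_2$-page. This is precisely the content of Adams's periodicity theorem specialized to $\cA$, and it is the step where the structure of the Steenrod algebra enters: because $\cA$ is generated by the $\Sq^{2^i}$, whose degrees grow geometrically, either Adams's direct filtration argument for $\cA$ or an inductive comparison of $\mathrm{Ext}_{\cA}$ with $\mathrm{Ext}$ over the finite sub-Hopf algebras $\cA(n)$ (whose cohomology rings are explicitly known and periodic in high filtration) yields invertibility of the periodicity operator above a line of slope $\tfrac12$ with some intercept. Granting this, the vanishing follows formally: since the periodicity operator preserves the quantity $s-2f$, a hypothetical nonzero class with $s>0$ and $s-2f<-3$ would have nonzero periodicity ``predecessors'' with the same (negative) value of $s-2f$ but arbitrarily negative stem, contradicting the vanishing of the Adams $E_2$-page in negative stems.

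It then remains to pin the intercept down to exactly $-3$. The periodicity argument only produces a slope-$\tfrac12$ vanishing line with some a priori intercept; Adams's sharpening comes from a finite computation in a bounded range of low filtration and low stem — carried out by hand, for instance in the lambda algebra or the May spectral sequence — checking that the only classes there with $s-2f<-3$ are the powers $h_0^k$, together with the observation that the region $\{\,s>0,\ s-2f<-3\,\}$ is closed under the periodicity operator, so that a vanishing statement verified in the base range propagates to all degrees.

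I expect the main obstacle to be the step asserting that the periodicity operator is invertible above a line of slope \emph{exactly} $\tfrac12$: this is the substantive part of Adams's theorem, and it genuinely uses the geometric growth of the degrees of the generators $\Sq^{2^i}$ — a crude degree count along a minimal free resolution of $\F_2$ over $\cA$ only gives a slope-$1$ vanishing line, which is far too weak here. Determining the intercept to be precisely $-3$ (rather than something more negative) is a secondary but still delicate point, since it hinges on an explicit low-degree base case that is compatible with the periodicity operator.
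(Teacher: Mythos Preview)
The paper does not actually prove this proposition: it is stated as a restatement of Adams's classical vanishing line result and simply carries the citation \cite{Ada66} with no accompanying argument. Your proposal therefore goes well beyond what the paper itself does, sketching the main ingredients of Adams's original proof (the slope-$\tfrac12$ periodicity theorem together with a low-dimensional check to pin the intercept to $-3$). As an outline of Adams's approach this is broadly reasonable, but since the paper treats the statement as a black box imported from the literature, there is nothing in the paper to compare your proof against.
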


On a standard Adams chart, \cref{prop:Adams-vanishing} says that the chart is concentrated below a line of slope $\frac{1}{2}$ and with $x$-intercept $-3$, with the exception of the elements $h_0^k$ for $k \geq 2$.

\begin{lemma}
\label{lem:AdamsE2=-3}
The only elements of the classical Adams $E_2$-page with $v_1$-intercept $-3$ are of the form $P^k h_1^3$ for $k \geq 0$.
\end{lemma}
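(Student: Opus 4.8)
The plan is to combine \cref{prop:Adams-vanishing} with the Adams periodicity theorem, which is the other main result of \cite{Ada66}. Recall that the periodicity operator $P = \langle -, h_0^4, h_3 \rangle$ raises the stem by $8$ and the Adams filtration by $4$, and therefore preserves the $v_1$-intercept. The periodicity theorem provides an isomorphism $P \colon \mathrm{Ext}^{s,f} \to \mathrm{Ext}^{s+8,f+4}$ for all bidegrees $(s,f)$ lying above a fixed line of slope strictly less than $\tfrac12$. Since the locus of $v_1$-intercept $-3$ is a line of slope exactly $\tfrac12$ that sits above the vanishing region of \cref{prop:Adams-vanishing}, there is an explicit constant $s_0$ such that for every stem $s \geq s_0$ both the point of $v_1$-intercept $-3$ in stem $s$ and the point eight stems below it lie inside the periodicity range. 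Note also that the exceptional elements $h_0^k$ of \cref{prop:Adams-vanishing} have even $v_1$-intercept, hence never lie on the line $v_1$-intercept $= -3$, so that line is entirely ``generic''.

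Two preliminary observations. First, if $x$ has $v_1$-intercept $-3$ then $h_0 x = h_1 x = 0$, because $h_0 x$ and $h_1 x$ land in degrees of $v_1$-intercept $-5$ and $-4$ and in nonzero stems, where \cref{prop:Adams-vanishing} forces vanishing. In particular $h_0^4 x = 0$, so $P x$ is defined, and a degree count shows that the Massey product has no indeterminacy. Second (the base case): in the initial finite range of stems below the periodicity threshold the classical Adams $E_2$-page is completely known, and direct inspection shows that the only elements of $v_1$-intercept $-3$ occurring there are the classes $h_1^3 = h_0^2 h_2$ in stem $3$, $P h_1^3$ in stem $11$, \ldots, $P^k h_1^3$ in stem $3 + 8k$ --- and that the intervening stems contribute nothing (stem $5$ is trivial, the $h_3$-tower ends at $h_0^3 h_3$, stem $9$ ends at $P h_1$, and so on).

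Now the induction on stems. Let $s \geq s_0$ and suppose $x \in \mathrm{Ext}^{s,f}$ has $v_1$-intercept $-3$. By the periodicity isomorphism there is a unique $y \in \mathrm{Ext}^{s-8,f-4}$ with $P y = x$; this $y$ again has $v_1$-intercept $-3$, so by induction (or the base case) $y = P^k h_1^3$ for some $k \geq 0$, whence $x = P^{k+1} h_1^3$. Conversely, $h_1^3 \neq 0$ and $P$ is injective all along this line (by the periodicity isomorphism for large stems, and by direct inspection for small ones), so each $P^k h_1^3$ is a nonzero class of $v_1$-intercept $-3$. This proves the lemma; in fact it shows the line $v_1$-intercept $= -3$ is one-dimensional in every stem congruent to $3$ modulo $8$ and zero in all other stems.

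The genuine obstacle is the input cited in the first paragraph: one must extract from \cite{Ada66} a periodicity statement sharp enough to cover the whole line $v_1$-intercept $= -3$ above a concrete bound $s_0$, being careful to use periodicity as an \emph{isomorphism} (not just surjectivity) and to verify that the source bidegree, and not only the target bidegree, lies in the periodicity range. Once $s_0$ is pinned down, the base-case verification and the inductive step are routine.
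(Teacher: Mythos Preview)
Your argument is correct and follows essentially the same route as the paper's proof: invoke Adams periodicity from \cite{Ada66} to reduce to a finite check in low stems, then carry out that check by inspection. The paper states this in two sentences (periodicity applies on the line $v_1$-intercept $=-3$; inspect degrees through $(11,7)$), while you spell out the inductive mechanism, the definedness and zero-indeterminacy of $P$, and the individual low-stem verifications---but the substance is the same.
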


\begin{proof}
The degrees under consideration lie within the region in which the Adams periodicity operator $P$ of degree $(8,4)$ is an isomorphism \cite{Ada66}.  The result follows from explicit low-dimensional computations in degrees up to $(11,7)$.
\end{proof}

\begin{longtable}{ll}
\caption{Elements in the Burklund-Xu $E_1$-page in Chow degree one with $v_1$-intercept equal to $-4$
\label{table:BX-E1}
} \\
\toprule
$(s,f)$ & element \\
\midrule \endfirsthead
\caption[]{Some elements of the Burklund-Xu $E_1$-page in Chow degree one with $v_1$-intercepts equal to $-4$}\\
\toprule
$(s,f)$ & element \\
\midrule \endhead
\bottomrule \endfoot
$(0,2)$ & $q_0 \cdot h_0$ \\
$(8k + 2, 4k + 3)$ & $q_0 \cdot P^k h_1^2$ \\
$(8k + 4, 4k + 4)$ & $q_1 \cdot P^k h_1^3$ \\
\end{longtable}

\begin{lemma}
\label{prop:v1-intercept=-4}
\cref{table:BX-E1} lists all non-zero elements in the Burklund-Xu $E_1$-page in Chow degree one with $v_1$-intercept equal to $-4$.
\end{lemma}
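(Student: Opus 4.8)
The plan is to exploit the decomposition of the Burklund--Xu $E_1$-page in Chow degree one as a free module over $M$, the cohomology of the classical Steenrod algebra: following \cite[Section 5.2]{KILRZ25}, $E_1 = \bigoplus_{i \geq 0} q_i \cdot M$, where the generator $q_i$ lies in degree $(2^i - 1, 1)$ (the $(s,f)$-degree of $h_i$). Since both the stem and the Adams filtration are additive under multiplication, so is the $v_1$-intercept, and the $v_1$-intercept of $q_i$ is $2^i - 3$. Hence a nonzero class $q_i \cdot x$ has $v_1$-intercept $-4$ exactly when $x$ lies in the part of $M$ of $v_1$-intercept $-4 - (2^i - 3) = -1 - 2^i$. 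So it suffices to identify, for each $i \geq 0$, the part of $M$ in $v_1$-intercept $-1 - 2^i$: this is $v_1$-intercept $-2$ when $i = 0$, $v_1$-intercept $-3$ when $i = 1$, and $v_1$-intercept at most $-5$ when $i \geq 2$.

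First I would dispose of the summands with $i \geq 2$. For such $i$ the integer $-1 - 2^i$ is odd and at most $-5$. By \cref{prop:Adams-vanishing}, the only classes of $M$ with $v_1$-intercept strictly below $-3$ are the powers $h_0^k$ with $k \geq 2$, and these have $v_1$-intercept $-2k$, which is even. Hence $M$ is zero in $v_1$-intercept $-1 - 2^i$, and these summands contribute nothing.

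For $i = 1$, \cref{lem:AdamsE2=-3} identifies the $v_1$-intercept $-3$ part of $M$ as the span of the classes $P^k h_1^3$, $k \geq 0$, which gives the family $q_1 \cdot P^k h_1^3$ in degree $(8k+4, 4k+4)$. For $i = 0$ I would prove the analogous statement one unit higher: the $v_1$-intercept $-2$ part of $M$ is spanned by $h_0$ together with the classes $P^k h_1^2$, $k \geq 0$. The argument mirrors the proof of \cref{lem:AdamsE2=-3}: these degrees lie in the range where the Adams periodicity operator $P$ is an isomorphism, so it is enough to check stems up to roughly $10$ explicitly, where one sees only $h_0$, $h_1^2$, and the $P$-tower on $h_1^2$; the only $h_0$-power of $v_1$-intercept $-2$ is $h_0$ itself, so the $h_0$-tower causes no interference. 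Multiplying by $q_0$ then gives $q_0 \cdot h_0$ in degree $(0,2)$ and $q_0 \cdot P^k h_1^2$ in degree $(8k+2, 4k+3)$. Finally, every class listed above is nonzero in $E_1$ because $E_1$ is free over $M$ and each of $h_0$, $P^k h_1^2$, $P^k h_1^3$ is nonzero in $M$. This accounts for exactly the entries of \cref{table:BX-E1}.

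The one genuinely new input is the classification of the $v_1$-intercept $-2$ part of $M$, and I expect this finite low-dimensional computation to be the main obstacle --- though it is routine and entirely parallel to \cref{lem:AdamsE2=-3}. The only other point requiring care is bookkeeping with the non-standard Burklund--Xu grading conventions from \cite[Section 5.2]{KILRZ25}, in particular pinning down the degrees of the generators $q_i$; once these are in hand the degree entries in \cref{table:BX-E1} follow by addition.
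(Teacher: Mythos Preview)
Your proposal is correct and follows essentially the same approach as the paper: decompose the $E_1$-page as $\bigoplus q_n \cdot M$, compute the $v_1$-intercept of $q_n$, rule out $n \geq 2$ via \cref{prop:Adams-vanishing} and the parity of $-1-2^n$, and handle $n=0,1$ by classifying the $v_1$-intercept $-2$ and $-3$ parts of $M$. The only difference is that where you propose to redo the $v_1$-intercept $-2$ classification by hand (your ``one genuinely new input''), the paper simply cites \cite[Lemma~6.4]{KILRZ25}, which already records exactly that computation.
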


\begin{proof}
We are searching for expressions of the form $q_n \cdot x$ with $x$ in the cohomology of the classical Steenrod algebra, such that $q_n \cdot x$ has $v_1$-intercept $-4$.  Since $q_n$ has $v_1$-intercept $2^n-3$, our desired $x$ has $v_1$-intercept $-1 - 2^n$.

If $n \geq 2$, then $ -1 - 2^n \leq -5$.  By \cref{prop:Adams-vanishing}, $x$ is of the form $h_0^j$.  However, $h_0^j$ has $v_1$-intercept $-2j$, which cannot equal $-1 - 2^n$ when $n \geq 2$.

It remains to consider elements of the form $q_0 \cdot x$ and $q_1 \cdot x$.  Since $q_0$ and $q_1$ have $v_1$-intercepts $-2$ and $-1$ respectively, we are looking for $x$ with $v_1$-intercepts $-2$ or $-3$.  Then  \cite[Lemma 6.4]{KILRZ25} and \cref{lem:AdamsE2=-3} tell us the possible values of $x$.
\end{proof}

\begin{lemma}
\label{prop:h0^j.vn}
For $n \geq 3$, the elements $q_n \cdot h_0^{2^{n-1}-1}$ are non-zero permanent cycles in the Burklund-Xu spectral sequence in Chow degree one.
\end{lemma}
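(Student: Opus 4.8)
The plan is to establish two statements: that $q_n \cdot h_0^{2^{n-1}-1}$ is nonzero on the $E_1$-page, and that every Burklund--Xu differential vanishes on it. The first is immediate: $h_0^{2^{n-1}-1}$ is nonzero in the cohomology of the classical Steenrod algebra, and by the description of the $E_1$-page in \cite[Section 5.2]{KILRZ25} multiplication by $q_n$ embeds the corresponding summand.

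For the second, I would begin with a degree computation. As noted in the proof of \cref{prop:v1-intercept=-4}, $q_n$ has $v_1$-intercept $2^n-3$; since $h_0$ has $v_1$-intercept $-2$, the class $q_n \cdot h_0^{2^{n-1}-1}$ has $v_1$-intercept $(2^n-3)-2(2^{n-1}-1) = -1$. Suppose $d_r\!\left(q_n \cdot h_0^{2^{n-1}-1}\right) \neq 0$ for some $r$. By the grading conventions of \cite[Section 5.2]{KILRZ25}, the differential $d_r$ lowers the index of the $q$-summand by $r$ and lowers the $v_1$-intercept by $2r+1$; this is consistent with the differential $d_1(q_1) = q_0 \cdot h_0$ of \cref{lem:BX-h0-local}, which drops the $v_1$-intercept by $3$. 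In particular only $1 \le r \le n$ is possible, and by $\F_2[h_0]$-linearity of $d_r$ the target equals $h_0^{2^{n-1}-1}\cdot d_r(q_n)$, which lies in the $q_{n-r}$-summand of $E_r$ at $v_1$-intercept $-2r-2$; that group is a subquotient of the cohomology of the classical Steenrod algebra in $v_1$-intercept $(-2r-2)-(2^{n-r}-3) = 1-2r-2^{n-r}$.

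Now the hypothesis $n \ge 3$ does the work. For $1 \le r \le n$ one has $2r + 2^{n-r} \ge 2n \ge 6$, so $1 - 2r - 2^{n-r} \le -5$. By \cref{prop:Adams-vanishing}, the cohomology of the classical Steenrod algebra in $v_1$-intercept at most $-5$ is nonzero only in even $v_1$-intercepts $-2k$, where it is spanned by $h_0^{k}$. If $1 \le r \le n-1$ then $2^{n-r}$ is even, so $1-2r-2^{n-r}$ is odd and the target group is zero. If $r = n$ then $1-2r-2^{n-r} = -2n$ and the target is a subquotient of $q_0 \cdot h_0^n$; but $q_0 \cdot h_0^n = d_1\!\left(q_1 \cdot h_0^{n-1}\right)$ vanishes on $E_2$, hence on $E_n$ since $n \ge 3$, so this target group is zero as well. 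Either way we contradict $d_r\!\left(q_n \cdot h_0^{2^{n-1}-1}\right) \neq 0$, so the element is a permanent cycle.

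The main obstacle is making the numerology of the third paragraph airtight: one must confirm, under the non-standard grading conventions of \cite{KILRZ25}, that $d_r$ lowers the $v_1$-intercept by exactly $2r+1$ (if it drops by more for $r \ge 2$ the argument only simplifies), and one must treat carefully the borderline cases $(n,r)\in\{(3,1),(3,2)\}$, where $1-2r-2^{n-r}$ equals $-5$, and $r=n$, where the target is the $h_0$-tower class $q_0 h_0^n$. A minor point is the $\F_2[h_0]$-linearity used in $q_0 \cdot h_0^n = d_1(q_1 \cdot h_0^{n-1})$, which follows from multiplicativity of the spectral sequence and $d_1(q_1) = q_0 \cdot h_0$. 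One could alternatively open the argument via \cref{lem:BX-h0-local}: for $n \ge 2$ the class $q_n$ survives the $h_0$-localized spectral sequence, so $q_n \cdot h_0^{2^{n-1}-1}$ is an $h_0$-local permanent cycle, and any differential on it must be valued in $h_0$-power torsion; by \cref{prop:Adams-vanishing} the corresponding classical class then has $v_1$-intercept at least $-3$, which for $n \ge 3$ again leaves only the $h_0$-tower exception, disposed of as before.
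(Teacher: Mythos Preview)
Your argument has two genuine gaps.

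First, the grading assumption is incorrect, and your hedge points the wrong way.  In the paper's conventions every Burklund--Xu differential $d_r$ has bidegree $(-1,+1)$ in $(s,f)$, so the $v_1$-intercept drops by exactly~$3$ regardless of~$r$; the index~$r$ is tracked only by the $q$-filtration.  Thus the target of any $d_r$ on $q_n\cdot h_0^{2^{n-1}-1}$ lies at $v_1$-intercept $-4$, not $-2r-2$, and the classical class underlying a target in the $q_{n-r}$-summand has $v_1$-intercept $-1-2^{n-r}$, not $1-2r-2^{n-r}$.  For $r=n-1$ and $r=n$ these values are $-3$ and $-2$, so your appeal to \cref{prop:Adams-vanishing} (which only bites below $-3$) does not apply; the parity argument likewise fails there.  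The paper avoids this case analysis entirely: since every differential lands at $v_1$-intercept $-4$, one simply invokes \cref{prop:v1-intercept=-4} (Table~\ref{table:BX-E1}) and checks that none of the listed degrees $(0,2)$, $(8k+2,4k+3)$, $(8k+4,4k+4)$ equals $(2^n-2,2^{n-1}+1)$ for $n\ge 3$.

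Second, you establish only that the element is a cycle, not that it survives.  ``Non-zero permanent cycle'' here means non-zero on $E_\infty$, and the paper uses this in \cref{lem:e0g^k-unique} to detect $x_n$.  You never rule out that $q_n\cdot h_0^{2^{n-1}-1}$ is hit by some differential.  The paper handles this with the $h_0$-localization of \cref{lem:BX-h0-local}: for $n\ge 2$ the class $q_n$ is not a boundary in the localized spectral sequence, hence $q_n\cdot h_0^{2^{n-1}-1}$ cannot be a boundary before localization.  You mention $h_0$-localization in your final paragraph, but only to constrain \emph{outgoing} differentials; the incoming direction is what is missing.
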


\begin{proof}
The elements under consideration have degree $(2^n-1, 2^{n-1})$.  If an element supported a differential, then the value of such a differential would have degree $(2^n-2, 2^{n-1} + 1)$, and it would have $v_1$-intercept 
\[
2^n - 2 - 2 (2^{n-1} + 1) = -4.
\]
\cref{prop:v1-intercept=-4} classifies such elements.  Of those elements, none lie in a degree of the form $(2^n-2, 2^{n-1} + 1)$ for $n \geq 3$.  Therefore, there are no possible targets for differentials on the elements $q_n \cdot h_0^{2^{n-1}-1}$, and they are permanent cycles.

It remains to show that the elements $q_n \cdot h_0^{2^{n-1}-1}$ are not hit by any differentials.  This follows from $h_0$-localization.  If there were such a differential, then the $h_0$-localized spectral sequence would exhibit a differential.  \cref{lem:BX-h0-local} shows that there are no such $h_0$-localized differentials.
\end{proof}

\begin{remark}
\label{rem:h0^j.vn}
The condition $n \geq 3$ in \cref{prop:h0^j.vn} is necessary.  When $n = 1$ and $n = 2$, the elements $q_1$ and $q_2 \cdot h_0$ support Burklund-Xu differentials.  (When $n = 0$, the exponent on $h_0$ is not an integer.)
\end{remark}

\begin{prop}
\label{lem:e0g^k-unique}
For $n \geq 3$, there is a unique non-zero element $x_n$ in the cohomology of the $\C$-motivic Steenrod algebra in degree $(2^{n-3}-1) \cdot (20, 4, 12) + (17, 4, 10)$.  Moreover, $x_n$ is $h_1$-periodic in the sense that no power of $h_1$ annihilates $x_n$.
\end{prop}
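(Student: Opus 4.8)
The plan is to read off the group $H^{***}\cA$ in the stated degree from the Burklund--Xu spectral sequence in Chow degree one.

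\emph{Reduction to the Burklund--Xu spectral sequence.} Write $k=2^{n-3}-1$, so that the degree in question is $(s,f,w)=(20k+17,\,4k+4,\,12k+10)$; its Chow degree $s+f-2w$ equals $1$, since $k(20,4,12)$ contributes $0$ and $(17,4,10)$ contributes $1$. Hence $H^{***}\cA$ in this degree is the associated graded of the Burklund--Xu filtration on the corresponding $E_\infty$-group of the Chow-degree-one Burklund--Xu spectral sequence. Working throughout with $\F_2$-vector spaces, it therefore suffices to show that this $E_\infty$-page is one-dimensional in bidegree $(s,f)=(20k+17,\,4k+4)$; the class $x_n$ is then the unique element detected by the unique surviving class.

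\emph{The $E_1$-page, and the main obstacle.} The bidegree $(20k+17,4k+4)$ has $v_1$-intercept $12k+9=3\cdot 2^{n-1}-3$. An $E_1$-class $q_m\cdot x$, with $x$ in the cohomology of the classical Steenrod algebra, lands in this bidegree exactly when $x$ has Adams filtration $4k+3$ and $v_1$-intercept $(3\cdot 2^{n-1}-3)-(2^m-3)=3\cdot 2^{n-1}-2^m$; the target degree rules out $x$ being a power of $h_0$, so \cref{prop:Adams-vanishing} forces $2^m\le 3\cdot 2^{n-1}+3$, i.e.\ $m\le n$. For each such $m$ one must enumerate the finitely many admissible $x$: the filtration $4k+3$ is large enough to place us inside the range where the Adams periodicity operator $P$ is an isomorphism, so every candidate is a $P$-power times a class near the edge of the Adams vanishing region, and these edge classes are controlled by explicit low-dimensional computation, \cref{lem:AdamsE2=-3}, and \cite[Lemma 6.4]{KILRZ25}. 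The same analysis must be carried out in the two adjacent bidegrees $(20k+18,4k+3)$ and $(20k+16,4k+5)$ --- both again of Chow degree one --- and, for multi-page differentials, in a few further bidegrees. Carrying out this enumeration, and verifying that precisely one $E_1$-class remains once all differentials are accounted for, is the main obstacle in the proof.

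\emph{Differentials and uniqueness.} Granting the enumeration, one kills the remaining classes by Burklund--Xu differentials. Incoming differentials are handled via $h_0$-localization, using \cref{lem:BX-h0-local} together with the propagation of the differential $d_1(q_1)=q_0 h_0$; outgoing differentials are excluded exactly as in the proof of \cref{prop:h0^j.vn}, by computing that any potential target has $v_1$-intercept $-4$ and invoking \cref{prop:v1-intercept=-4} to see that the required bidegree contains no such class. The upshot is that the $E_\infty$-page is one-dimensional in bidegree $(20k+17,4k+4)$, whence $x_n$ exists and is the unique non-zero element of $H^{***}\cA$ in this degree.

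\emph{$h_1$-periodicity.} It remains to show $h_1^N x_n\neq 0$ for all $N$. Since $h_1$ has Chow degree $0$, each bidegree $(20k+17+N,\,4k+4+N)$ again lies in Chow degree one, and $h_1$-periodicity of $x_n$ is equivalent to the non-vanishing of its image under the $h_1$-periodicization $H^{***}\cA\to\F_2[h_1^{\pm 1}][v_1^4,v_2,v_3,\dots]$. In Chow degree one this target is $\bigoplus_{i\ge 2}\F_2[h_1^{\pm 1}]\,v_i$, and a comparison of stems, filtrations, and weights shows that its component in degree $(20k+17,4k+4,12k+10)$ is one-dimensional, spanned by $h_1^{2^{n-1}-1}v_n$. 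One then checks --- for instance by comparison with the cohomology of $\C$-motivic $\cA(2)$, where the corresponding class is non-$h_1$-torsion, or directly from the known $h_1$-local structure --- that $x_n$ maps onto this generator, and concludes that $x_n$ is $h_1$-periodic.
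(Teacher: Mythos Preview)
Your proposal contains a genuine grading error that derails the argument. The paper explicitly warns that the Burklund--Xu spectral sequence uses non-standard grading conventions (see \cite[Section~5.2]{KILRZ25}); in that grading the bidegree attached to a Chow-degree-one class in motivic degree $(s,f,w)$ is not $(s,f)$. In fact the tridegree $(20k+17,4k+4,12k+10)$ lands at Burklund--Xu bidegree $(2^n-1,2^{n-1})$, whose $v_1$-intercept is $(2^n-1)-2\cdot 2^{n-1}=-1$, not $12k+9$. You have mixed conventions: you compute the motivic quantity $s-2f$, but then feed it into statements---$q_m$ has $v_1$-intercept $2^m-3$, Adams vanishing, \cref{lem:AdamsE2=-3}---that are formulated in the Burklund--Xu/classical grading. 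Once the correct intercept $-1$ is used, \cite[Lemma~6.5]{KILRZ25} immediately gives exactly two $E_1$-classes in the relevant bidegree, namely $q_n\cdot h_0^{2^{n-1}-1}$ and $q_0\cdot P^{2^{n-3}-1}h_0^2h_3$; the second is hit by a $d_1$, and \cref{prop:h0^j.vn} shows the first survives. What you label ``the main obstacle''---an open-ended enumeration over all $m\le n$, with $x$ having $v_1$-intercept $3\cdot 2^{n-1}-2^m$---is an artifact of the wrong grading, and as stated it is not only unfinished but unworkable: for $m$ near $n$ the required $x$ lies far from the Adams vanishing line, outside the range where $P$ is an isomorphism, so your proposed reduction to ``$P$-powers times edge classes'' does not apply.

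Your $h_1$-periodicity argument is also more circuitous than necessary and leans on the comparison with $\cA(2)$, which is the content of the \emph{next} result in the paper. The paper's argument is direct: in the non-standard conventions, $h_1$-localization on the motivic side corresponds to $h_0$-localization in the Burklund--Xu spectral sequence, and \cref{lem:BX-h0-local} shows that $q_n\cdot h_0^{2^{n-1}-1}$ survives $h_0$-localization for $n\ge 2$. Since $x_n$ is detected by this class, it is $h_1$-periodic.
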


\begin{proof}
We are considering elements that are detected in degree $(2^n-1, 2^{n-1})$ in the Burklund-Xu spectral sequence in Chow degree one.  Such elements have $v_1$-intercept equal to $-1$.  \cite[Lemma 6.5]{KILRZ25} classifies the elements with this $v_1$-intercept.

We find two elements in the Burklund-Xu $E_1$-page in the relevant degree: $q_n \cdot h_0^{2^{n-1}-1}$ and $q_0 \cdot P^{2^{n-3}-1} h_0^2 h_3$.  The latter elements are zero in the Burklund-Xu $E_\infty$-page because of the differentials
\[
d_1 \left( q_1 \cdot P^{2^{n-3}-1} h_0 h_3 \right) = q_0 \cdot P^{2^{n-3}-1} h_0^2 h_3.
\]
On the other hand, \cref{prop:h0^j.vn} shows that $q_n \cdot h_0^{2^{n-1}-1}$ is a non-zero permanent cycle.  Let $x_n$ be the unique element that is detected by this permanent cycle.  The $h_0$-localized spectral sequence analyzed in \cref{lem:BX-h0-local} tells us that $x_n$ is an $h_1$-periodic element in the cohomology of the $\C$-motivic Steenrod algebra.
\end{proof}

We briefly recall some notation.  Let $\cA$ be the $\C$-motivic Steenrod algebra, and let $\cA(2)$ be the subalgebra of $\cA$ generated by $\Sq^1$, $\Sq^2$, and $\Sq^4$.  Then $H^{***} \cA$ is the cohomology of the $\C$-motivic Steenrod algebra, also known as the $\C$-motivic Adams $E_2$-page.  Similarly, $H^{***} \cA(2)$ is the cohomology of $A(2)$, also known as the $\C$-motivic Adams $E_2$-page for the $\C$-motivic modular forms spectrum $\mmf$.

Recall from \cite{GI15} that $H^{***}\cA \left[ h_1^{-1} \right]$ is isomorphic to $\F_2 \left[ h_1^{\pm 1} \right] \left[v_1^4, v_2, v_3, \ldots \right]$.

\begin{prop}
\label{prop:e0g^k}
For $n \geq 3$, the element $x_n$ in the cohomology of the $\C$-motivic Steenrod algebra in degree $(2^{n-3}-1) \cdot (20, 4, 12) + (17, 4, 10)$ maps to $v_n$ (up to $h_1^{\pm 1}$-multiples) under the $h_1$-localization map.
\end{prop}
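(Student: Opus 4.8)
The plan is to deduce the statement from the $h_1$-periodicity of $x_n$, which was established in \cref{lem:e0g^k-unique}, together with a tridegree count in the polynomial algebra $\F_2[h_1^{\pm 1}][v_1^4, v_2, v_3, \ldots]$ of \cite{GI15}. Since $x_n$ is $h_1$-periodic, it is not $h_1$-power torsion, so it does not lie in the kernel of the $h_1$-localization map; hence its image in $H^{***}\cA[h_1^{-1}]$ is nonzero. Because the localization map preserves the tridegree, this image lies in the same tridegree as $x_n$. It therefore remains to show that this tridegree contains a unique nonzero element of $H^{***}\cA[h_1^{-1}]$, and that this element is $h_1^{2^{n-1}-1} v_n$.

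First I would compute the tridegree of $x_n$. Expanding $(2^{n-3}-1)(20,4,12)+(17,4,10)$ gives the tridegree $(5\cdot 2^{n-1}-3,\ 2^{n-1},\ 3\cdot 2^{n-1}-2)$, whose Chow degree $s+f-2w$ equals $1$, in agreement with the observation in the introduction that $e_0 g^k$ lives in Chow degree one. Since the Chow degree is a linear function of the tridegree, every monomial of $\F_2[h_1^{\pm 1}][v_1^4, v_2, v_3, \ldots]$ in this tridegree has Chow degree $1$. Now $h_1$ has Chow degree $0$, the generator $v_1^4$ has Chow degree $4$, and each $v_j$ with $j \geq 2$ has Chow degree $1$, so the Chow-degree-one part of this algebra is the free $\F_2[h_1^{\pm 1}]$-module on the generators $v_j$ for $j \geq 2$. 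Matching Adams filtrations forces the monomial to be of the form $h_1^{2^{n-1}-1} v_j$, and then matching stems forces $2^{j+1}-2 = 2^{n+1}-2$, i.e.\ $j = n$; the motivic weight then automatically agrees. As $n \geq 3$, the element $v_n$ is one of the polynomial generators, so the relevant tridegree of $H^{***}\cA[h_1^{-1}]$ is exactly $\F_2\{h_1^{2^{n-1}-1} v_n\}$. Combining the two halves, the image of $x_n$ is the unique nonzero element $h_1^{2^{n-1}-1} v_n$ of this one-dimensional space, which is $v_n$ up to an invertible power of $h_1$.

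I do not anticipate a genuine obstacle here: the argument is the bookkeeping above together with two external inputs, namely the structure $H^{***}\cA[h_1^{-1}] \cong \F_2[h_1^{\pm 1}][v_1^4, v_2, v_3, \ldots]$ from \cite{GI15} and the $h_1$-periodicity from \cref{lem:e0g^k-unique}. The only point requiring a little care is to verify that the enumeration of Chow-degree-one monomials is exhaustive — in particular that no factor of $v_1^4$ can appear, which is what forces the image to be a single $v_j$ rather than a product or a more complicated sum — and to fix the normalization of the degrees of the $v_j$, which one can pin down from the known images $P h_1 \mapsto h_1 \cdot v_1^4$ and $c_0 \mapsto h_1^2 \cdot v_2$.
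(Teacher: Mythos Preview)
Your proposal is correct and follows the same approach as the paper: use \cref{lem:e0g^k-unique} to see that $x_n$ has nonzero $h_1$-localization, then argue by degree that $v_n$ is the only possible value. The paper's proof compresses this into the phrase ``for degree reasons''; your write-up unpacks that phrase, using the Chow degree as an efficient filter before matching filtration and stem.
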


\begin{proof}
We know from \cref{lem:e0g^k-unique} that $x_n$ has a non-zero value under the $h_1$-localization map.  For degree reasons, $v_n$ is the only possible non-zero value.
\end{proof}

\begin{remark}
\label{rem:A(2)}
Recall that $\cA(2)$ is the subalgebra of the $\C$-motivic Steenrod algebra that is generated by $\Sq^1$, $\Sq^2$, and $\Sq^4$.  The values of the map $H^{***}\cA[h_1^{-1}] \rightarrow H^{***}\cA(2)[h_1^{-1}]$ are completely known, as stated in \cite[Conjecture 5.5]{GI15}.  
\footnote{This conjecture has been proved, using a combination of \cite[Proposition 6.4]{GI15}, \cite{AM17}, and \cite{GIKR21}.}
Recall from \cite{GI15} that $H^{***}\cA(2) \left[ h_1^{-1} \right]$ is isomorphic to $\F_2 \left[ h_1^{\pm 1} \right] \left[ v_1^4, v_2, u \right]$, and that the map $H^{***}\cA[h_1^{-1}] \rightarrow H^{***}\cA(2)[h_1^{-1}]$ takes $v_n$ to $u^{2^{n-2}-1} v_2$.
\footnote{The element $u$ was called $a_1$ in \cite{GI15}, but $u$ is consistent with other work on the cohomology of $\C$-motivic $A(2)$ \cite{Isa09} \cite{Baer}.}
\end{remark}

\begin{thm}
\label{prop:e0g^k-final}
For $n \geq 3$, the element $x_n$ in the cohomology of the $\C$-motivic Steenrod algebra in degree $(2^{n-3}-1) \cdot (20, 4, 12) + (17, 4, 10)$ maps to the product $e_0 g^{2^{n-3}-1}$ in $H^{***} \cA(2)$.
\end{thm}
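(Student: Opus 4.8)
The plan is to combine \cref{prop:e0g^k} with the naturality of $h_1$-localization. Write $\phi\colon H^{***}\cA \map H^{***}\cA(2)$ for the restriction map induced by $\cA(2)\hookrightarrow\cA$, and $\rho\colon H^{***}\cA[h_1^{-1}]\map H^{***}\cA(2)[h_1^{-1}]$ for its $h_1$-localization, so that $h_1$-localization makes the square with top $\phi$, bottom $\rho$, and vertical maps the localization maps commute. By \cref{prop:e0g^k} the $h_1$-localization of $x_n$ in $H^{***}\cA[h_1^{-1}]$ is an $h_1^{\pm1}$-multiple of $v_n$, and by \cref{rem:A(2)} the map $\rho$ sends $v_n$ to $u^{2^{n-2}-1}v_2$. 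Hence $\phi(x_n)$ localizes to a nonzero element of $H^{***}\cA(2)[h_1^{-1}]$, namely $h_1^a u^{2^{n-2}-1}v_2$ for some $a\in\Z$; in particular $\phi(x_n)\neq 0$.

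Second, I would identify this localization with that of $e_0 g^{2^{n-3}-1}$. Applying \cref{prop:e0g^k} and \cref{lem:e0g^k-unique} with $n=3$ identifies $x_3$ with $e_0$ and shows that $e_0$ localizes to an $h_1^{\pm1}$-multiple of $\rho(v_3)=uv_2$; and by the (now-proved) description in \cite{GI15} of the $h_1$-localization of $H^{***}\cA(2)$, the class $g$ localizes to an $h_1^{\pm1}$-multiple of $u^2$. Therefore $e_0 g^{2^{n-3}-1}$ localizes to an $h_1^{\pm1}$-multiple of $uv_2\cdot(u^2)^{2^{n-3}-1}=u^{2^{n-2}-1}v_2$. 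Since $\phi(x_n)$ and $e_0 g^{2^{n-3}-1}$ lie in the same tri-degree, and that tri-degree has Chow degree one, the only monomials of $H^{***}\cA(2)[h_1^{-1}]=\F_2[h_1^{\pm1}][v_1^4,v_2,u]$ in it are scalar multiples of the single monomial $h_1^a u^{2^{n-2}-1}v_2$ (a Chow-degree-one monomial has exactly one factor of $v_2$ and no factor of $v_1^4$, and its $h_1$-exponent is then pinned down by the tri-degree); so the two localizations coincide. Consequently $\phi(x_n)$ and $e_0 g^{2^{n-3}-1}$ differ by an element in the kernel of $h_1$-localization, i.e.\ by $h_1$-power torsion.

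The remaining, and hardest, step is to eliminate that $h_1$-power-torsion ambiguity: one must show that $H^{***}\cA(2)$ has no nonzero $h_1$-power torsion in tri-degree $(2^{n-3}-1)(20,4,12)+(17,4,10)$ — equivalently, that this tri-degree is one-dimensional over $\F_2$ — whereupon $\phi(x_n)$, being nonzero, must equal $e_0 g^{2^{n-3}-1}$. The difficulty is that these tri-degrees form a sparse family whose members grow geometrically rather than arithmetically, so no single periodicity operator reduces the claim to finitely many cases at once. Instead I would use the known complete structure of $H^{***}\cA(2)$ in Chow degree one \cite{Baer}\cite{Isa09}\cite{GI15} together with the observation that all of these degrees lie on the line of slope $\tfrac15$ through $(17,4)$ — precisely the direction of multiplication by $g$ — and that multiplication by $g$ is an isomorphism on $H^{***}\cA(2)$ throughout the region joining $(17,4,10)$ to the degrees in question (a $g$-periodicity, or $w_1$-periodicity, statement for $H^{***}\cA(2)$). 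This identifies the tri-degree $(2^{n-3}-1)(20,4,12)+(17,4,10)$ with $(17,4,10)$, which is spanned by $e_0$ alone, and completes the proof.
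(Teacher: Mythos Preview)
Your argument is correct and follows the same architecture as the paper's proof: use the commutative square relating $H^{***}\cA$, $H^{***}\cA(2)$, and their $h_1$-localizations together with \cref{prop:e0g^k} and \cref{rem:A(2)} to see that $\phi(x_n)\neq 0$, and then appeal to the known structure of $H^{***}\cA(2)$ in the relevant tri-degree to force $\phi(x_n)=e_0 g^{2^{n-3}-1}$.  Two remarks on economy.  First, your second paragraph is redundant: once you know the tri-degree in $H^{***}\cA(2)$ is one-dimensional and $\phi(x_n)\neq 0$, the identification is automatic, so matching the $h_1$-localizations of $\phi(x_n)$ and $e_0 g^{2^{n-3}-1}$ adds nothing.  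Second, you frame the one-dimensionality as the ``hardest'' step and propose a $g$-periodicity argument to reduce to the $17$-stem; the paper simply cites the complete computation of $H^{***}\cA(2)$ from \cite{Isa09} and reads off that $e_0 g^{2^{n-3}-1}$ is the only nonzero class in that tri-degree.  Since $H^{***}\cA(2)$ is fully known, no further structural argument is needed.
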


\begin{proof}
Consider the diagram
\begin{equation*}
\label{eq:e0g-diagram}
\xymatrix{
H^{***} \cA \ar[d] \ar[r] \ar[d] & H^{***} \cA(2) \ar[d] \\
H^{***} \cA \left[ h_1^{-1} \right] \ar[r] & H^{***} \cA(2) \left[ h_1^{-1} \right] \\
}
\end{equation*}
in which the vertical maps are $h_1$-localizations.  The element $x_n$ in the upper left corner maps to $v_n$ in the lower left corner, according to \cref{prop:e0g^k}.  \cref{rem:A(2)} says that $v_n$ has a non-zero image in the lower right corner.  Therefore, $x_n$ has a non-zero image in $H^{***} \cA(2)$.  Finally, we inspect the possible elements in $H^{***} \cA(2)$ in the relevant tri-degree \cite{Isa09}, and we find that $e_0 g^{2^{n-3}-1}$ is the only possible value.
\end{proof}

\begin{prop}
\label{lem:e0g^k-indecomposable}
For $n \geq 3$, the element $x_n$ is multiplicatively indecomposable in $H^{***} \cA$.
\end{prop}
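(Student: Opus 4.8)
The plan is to suppose $x_n = \sum_i a_i b_i$ with all $a_i, b_i \in H^{***}\cA$ of positive degree, and to derive a contradiction. The first step is a Chow degree reduction. Since $\cl(x_n) = 1$ and $H^{***}\cA$ is concentrated in non-negative Chow degrees, after reindexing we may assume $\cl(a_i) = 0$ and $\cl(b_i) = 1$ for every $i$. I then use that, modulo $h_1$-power torsion, the Chow degree zero part of $H^{***}\cA$ is the polynomial algebra $\F_2[h_1]$: a degree count shows that the Chow degree zero part of $H^{***}\cA[h_1^{-1}] = \F_2[h_1^{\pm 1}][v_1^4, v_2, v_3, \ldots]$ is $\F_2[h_1^{\pm 1}]$ (every polynomial generator has positive Chow degree), and the image of $H^{***}\cA$ is then forced by non-negativity of stems to be $\F_2[h_1]$. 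Hence each $a_i$ is either $h_1$-power torsion or congruent to $h_1^{k_i}$ modulo $h_1$-power torsion for some $k_i \geq 1$, and collecting terms gives $x_n = h_1 \cdot w + t$ for some $w \in H^{***}\cA$ and some $h_1$-power-torsion element $t$. Since the $h_1$-localization of $x_n$ is the nonzero element $v_n$ (\cref{prop:e0g^k}), this says exactly that $h_1^{-1} v_n$ lies in the image of $H^{***}\cA \to H^{***}\cA[h_1^{-1}]$, i.e.\ that $x_n$ is divisible by $h_1$ in the $h_1$-free quotient of $H^{***}\cA$.

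So the proposition reduces to showing that $x_n$ is \emph{not} divisible by $h_1$ modulo $h_1$-power torsion. The cleanest route pushes the previous paragraph forward along the ring map $H^{***}\cA \to H^{***}\cA(2)$. By \cref{prop:e0g^k-final} the element $x_n$ maps to $e_0 g^{2^{n-3}-1}$, so a decomposition of $x_n$ would force $e_0 g^{2^{n-3}-1}$ to be divisible by $h_1$ modulo $h_1$-power torsion in $H^{***}\cA(2)$. But $H^{***}\cA(2)$ is completely known \cite{Isa09}, and one checks directly that in the tri-degree of $x_n$ shifted down by $(1,1,1)$ there is no element of $H^{***}\cA(2)$ whose $h_1$-localization is $h_1^{-1} u^{2^{n-2}-1} v_2$ (the $h_1$-localized image of $e_0 g^{2^{n-3}-1}$ by \cref{rem:A(2)}), so $e_0 g^{2^{n-3}-1}$ is $h_1$-indivisible modulo $h_1$-power torsion; this is the desired contradiction.

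Alternatively, and more in the spirit of the rest of this section, one can work inside the Burklund-Xu spectral sequence. By \cref{lem:e0g^k-unique} the element $x_n$ is detected by $q_n \cdot h_0^{2^{n-1}-1}$, and since $h_1$-multiplication on $H^{***}\cA$ corresponds to $h_0$-multiplication in the Burklund-Xu spectral sequence (the mechanism behind \cref{lem:BX-h0-local}), non-$h_1$-divisibility modulo torsion amounts to $q_n \cdot h_0^{2^{n-1}-1}$ not being $h_0$-divisible in the Burklund-Xu $E_\infty$-page. The only candidate witness is $q_n \cdot h_0^{2^{n-1}-2}$, which sits in a Burklund-Xu degree with $v_1$-intercept $1$; one would classify the Burklund-Xu $E_1$-page in that degree using the $v_1$-intercept bookkeeping of \cref{prop:v1-intercept=-4} and \cref{prop:h0^j.vn} together with the corresponding results of \cite{KILRZ25}, and show that $q_n \cdot h_0^{2^{n-1}-2}$ supports a Burklund-Xu differential for $n \geq 3$ — the analogue, one dimension higher, of the $n = 1, 2$ phenomena recorded in \cref{rem:h0^j.vn}. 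Establishing this differential, equivalently that $2^{n-1}-1$ is genuinely the bottom of the surviving $h_0$-tower in the $q_n$-column, is the main obstacle and is the only part of the argument that requires the Burklund-Xu machinery rather than formal manipulation.
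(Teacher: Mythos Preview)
Your main argument is correct and follows the same two-step strategy as the paper: reduce decomposability of $x_n$ to $h_1$-divisibility via the $h_1$-localization, then rule out $h_1$-divisibility by pushing to $H^{***}\cA(2)$ and inspecting the known answer there.  Your Chow-degree bookkeeping makes the reduction more explicit than the paper's one-line appeal to the indecomposability of $v_n$, and your careful ``modulo $h_1$-power torsion'' qualifier is arguably more honest than the paper's bare ``not divisible by $h_1$''; the alternative Burklund-Xu route you sketch is, as you acknowledge, left incomplete.
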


\begin{proof}
\cref{prop:e0g^k-final} tells us that $x_n$ is not divisible by $h_1$ because its image $e_0 g^{2^{n-3}-1}$ in $H^{***}\cA(2)$ is not divisible by $h_1$.  On the other hand, \cref{prop:e0g^k} tells us that $x_n$ maps to an indecomposable element in the $h_1$-localization.  These two claims combine to show that $x_n$ is indecomposable.
\end{proof}

\begin{remark}
Because of \cref{prop:e0g^k-final}, we prefer to rename the indecomposable element $x_n$ to $e_0 g^{2^{n-3}-1}$.  This is consistent with past practice \cite{IWX20}.  The reader must be careful that this element is indecomposable, even though its name is made from more than one symbol.
\end{remark}

\begin{remark}
\label{rem:e0g^k-tau-local}
For all $n \geq 3$, consider the image of the indecomposable element $x_n$ of $H^{***}\cA$ under the $\tau$-localization map $H^{***} \cA \rightarrow H^{**} \cA^{\cl} [\tau^{\pm 1}]$.  Here $\cA^{\cl}$ is the classical Steenrod algebra, and $H^{**} \cA^{\cl}$ is its cohomology.

Because of the commutative diagram
\begin{equation*}
\label{eq:e0g-localization-diagram}
\xymatrix{
H^{***} \cA \ar[d] \ar[r] \ar[d] & H^{***} \cA(2) \ar[d] \\
H^{**} \cA^{\cl} \left[ \tau^{\pm 1} \right] \ar[r] & H^{**} \cA^{\cl}(2) \left[ \tau^{\pm 1} \right], \\
}
\end{equation*}
we know that the image in $H^{**} \cA^{\cl} \left[ \tau^{\pm 1} \right]$ is of the form $e_0 g^{2^{n-3} - 1} + y$, where $y$ lies in the kernel of the map $H^{**} \cA^{\cl} \rightarrow H^{**} \cA^{\cl}(2)$.  Unfortunately, our techniques do not allow us to compute $y$, although it is reasonable to guess that it equals $0$ in all cases.
\end{remark}

\bibliographystyle{amsalpha}
\bibliography{bib}

\end{document}